\newtheorem{theorem}{Theorem}
\newtheorem{lemma}{Lemma}
\newtheorem{corollary}{Corollary}
\theoremstyle{remark}
\newtheorem{remark}{Remark}
\theoremstyle{definition}
\newtheorem{definition}{Definition}
\DeclareMathOperator{\elm}{elm}
\DeclareMathOperator{\Jac}{Jac}
\DeclareMathOperator{\Ab}{Ab}
\DeclareMathOperator{\dAb}{dAb}
\DeclareMathOperator{\bl}{Bl}
\DeclareMathOperator{\mult}{mult}
\DeclareMathOperator{\Hom}{Hom}
\DeclareMathOperator{\HH}{H}
\DeclareMathOperator{\TT}{T}
\DeclareMathOperator{\WW}{W}
\DeclareMathOperator{\pic}{Pic}
\begin{document}

\title[Hyperelliptic curves and Hitchin tangential covers]{Hyperelliptic curves and Hitchin tangential covers}
\author[Taejung Kim]{Taejung Kim}

\address{Korea Institute for Advanced Study\\
207-43 Cheongyangri-dong\\
Seoul 130-722, Korea}

\thanks{The author would like to express his sincere gratefulness to the Korea Institute for Advanced Study for providing him a hospital environment
while preparing this manuscript and to William Goldman, Serguei Novikov, and Niranjan Ramachandran for giving him their deep insights, lively discussions, and valuable suggestions about this paper.}

\keywords{Elementary transformation, Elliptic soliton, Hitchin system, Hyperelliptic curve, Ruled surface, Tangential cover.}

\subjclass[2010]{14C20, 14H60, 14H70, 14J26, 14Q05, 32L10}
\date{\today}
\email{tjkim@kias.re.kr}

\begin{abstract}
In \cite{kim11} we have generalized a tangency condition in the Treibich-Verdier theory \cite{trei89,tv90,trei97} about elliptic solitons to a Hitchin system. As an application of this generalization, we will define, so-called, Hitchin hyperelliptic tangential covers and show the finiteness of them among all Hitchin tangential covers.
\end{abstract}
\maketitle

\section{Introduction}

In 1990, Treibich and Verdier completely characterized, so-called, elliptic solitons. In their theory, what was essential was the concept of a ``tangential cover''. The new concept indeed makes it possible to classify analytic objects by means of algebraic tools. As a spectacular application of the concept, they showed that there are only finitely many hyperelliptic tangential covers over a fixed elliptic curve. In \cite{kim11} we generalized the concept of a ``tangential cover'' to a Hitchin system (see \cite{don96,hit87,kim11} for the details of the Hitchin theory). Loosely speaking, the generalized tangency condition in \cite{kim11} provides a way of taking a particular family $\mathscr{HT}(n,g,\mathfrak{S})$ of Hitchin spectral covers \emph{algebraically}. The main idea of \cite{kim11} is as follows: We construct a particular ruled surface $\mathfrak{S}$ by elementary transformations from $\mathfrak{R}\times\mathbb{P}^1$ where the genus of a Riemann surface $\mathfrak{R}$ is $g>1$ and we consider a family of Hitchin spectral curves which can be realized as divisors on $\mathfrak{S}$. We observe that those Hitchin spectral curves in the surface $\mathfrak{S}$ will have some special property similar to elliptic solitons, i.e., tangential covers over an elliptic curve, in the Treibich-Verdier theory. Hence, we will call them Hitchin tangential covers as a generalization of the tangential covers. We denote the space of all Hitchin tangential covers $\pi:\widehat{\mathfrak{R}}\to\mathfrak{R}$ of degree $n$ by $\mathscr{HT}(n,g,\mathfrak{S})$. In \cite{kim11}, we proved for $n>2$
\begin{equation}\label{dimeq}
\dim_\mathbb{C}\mathscr{HT}(n,g,\mathfrak{S})=(n^2-1)(g-1)-1.
\end{equation}

As a sequel of \cite{kim11}, we will continue to investigate the generalized tangency condition and exhibit one application of this analysis in this paper. That is, we will prove the finiteness of Hitchin hyperelliptic tangential covers over a hyperelliptic curve. However, in order to prove the claim, we may encounter one difficulty which comes from a way of constructing $\mathfrak{S}$ in \cite{kim11}. In \cite{tv90}, Treibich and Verdier construct a ruled surface $S$ over an elliptic curve $X$ where a natural involution of the elliptic curve $X$ is canonically extended to a natural involution of $S$. It was the key idea of proving the finiteness of hyperelliptic tangential covers in \cite{tv90}. Hence, we have to provide a similar construction of a ruled surface $\mathfrak{S}$ over a hyperelliptic curve $(\mathfrak{R},\tau)$ in a way of having a naturally induced involution $\tau$ on $\mathfrak{S}$ from $\tau$ on $\mathfrak{R}$ analogous to the Treibich-Verdier construction. Once we construct the ruled surface $\mathfrak{S}$ in this way, the rest of the proofs of the finiteness naturally follows from mimicking the techniques in the Treibich-Verdier theory \cite{tv90}. Hence, the general theme of the Treibich-Verdier theory will be repeated. Consequently, we should mention that the enormous influence of paper \cite{tv90} to this paper should be credited and we would like to express our indebtedness of insights and guidance from \cite{tv90} during writing this paper. We hope that this paper would manifest and develop the versatility and generality of the core algebraic procedure in the Treibich-Verdier theory by illustrating its adaptability to a Hitchin dynamical system in addition to the results obtained in this paper about the Hitchin system.

The structure of this paper is as follows: In Section~\ref{unisec1}, we will formulate another construction of a ruled surface which we have constructed in \cite{kim11} where Hitchin spectral curves can be defined as divisors. From this new point of view of construction, when a base curve $\mathfrak{R}$ is a hyperelliptic curve, the associated ruled surface $\mathfrak{S}$ will obtain a natural involution which plays an important role in describing Hitchin hyperelliptic tangential covers. Hence, we can observe that the surface $\mathfrak{S}$ is the right generalization of the surface $S$ in the elliptic soliton theory \cite{tv90}. In Section~\ref{tansec3}, we will characterize Hitchin hyperelliptic tangential covers over a hyperelliptic curve by adapting the framework of Treibich-Verdier theory. In Section~\ref{secfin4}, we will show the finiteness of Hitchin hyperelliptic tangential covers over a hyperelliptic curve by calculating the dimension of a linear system explicitly.

\section{The universal affine bundle and the ruled surface}\label{unisec1}

In \cite{kim11}, we constructed a ruled surface for a Hitchin system to generalize the Treibich-Verdier theory for elliptic solitons. In this section we will give another construction of the same ruled surface in \cite{kim11}. The equivalence of those two constructions is the essential part of characterization of the finiteness of hyperelliptic Hitchin tangential covers in the moduli of all the Hitchin tangential covers. For the complete backgrounds of the construction in this section, we refer \cite{gol08, har77, maru70}:

Let $\mathfrak{R}$ be a compact Riemann surface of genus $g>0$. From \cite{gol08}, we know that $\Hom(\pi_1(\mathfrak{R}),\mathbb{C}^\ast)/\mathbb{C}^\ast$, which is called a Betti groupoid, is holomorphically isomorphic to $\HH^{1}_{DR}(\mathfrak{R},\mathbb{C})/\HH^1(\mathfrak{R},\mathbb{Z})\cong\HH^1(\mathfrak{R},\mathbb{C^\ast})\cong(\mathbb{C}^\ast)^{2g}$, so-called a de Rham groupoid, which is the moduli space of flat $\mathbb{C}^\ast$-connections over $\mathfrak{R}$. On the other hands, a moduli space of Higgs bundles, $\TT^\ast\Jac(\mathfrak{R})\cong\Jac\mathfrak{R}\times\HH^{1,0}(\mathfrak{R})$, which is called a Dolbeault groupoid, is not biholomorphic to the other two spaces, since it is not Stein. Nevertheless, all three spaces are diffeomorphic to each other. In particular, the correspondence between a de Rham groupoid and a Dolbeault groupoid is given by,
$$D_0+[\eta]\to \bigg[\big((i\Im\eta)^{0,1},(\Re\eta)^{1,0}\big)\bigg](p.34\text{ in \cite{gol08}})$$
where $\eta$ is a harmonic form and $D_0$ is the reference flat connection. Using this correspondence, we have a short exact sequence of commutative groups
\begin{equation}\label{grse1}
0\to(\mathbb{G}_{a})^{g}\to(\mathbb{C}^\ast)^{2g}\to \Jac(\mathfrak{R})\to 0
\end{equation}
where $\mathbb{G}_a$ is the affine group of translations. So we have constructed an affine bundle $\widehat{\Delta}$ over $\Jac(\mathfrak{R})$ with fiber $(\mathbb{G}_{a})^{g}$. Note that this is a generalization of a sequence in {\em p.445} in \cite{tv90}. It is well-known that there is a universal affine embedding of $\widehat{\Delta}$ to a holomorphic bundle of rank $g+1$. Using the group structures of \eqref{grse1}, we may see that the structure group $\{G_{ij}\}$ of transition functions for this case is given by
$$G_{ij}=\begin{pmatrix}
  1& 0 & \cdots &z'_{1j}-z'_{1i}  \\
  0& 1 & \cdots & z'_{2j}-z'_{2i} \\
  \vdots  & \vdots  & \ddots & \vdots  \\
  0 & 0 & \cdots & z'_{gj}-z'_{gi}\\
 0 & 0 & \cdots & 1
 \end{pmatrix}\in\mathbf{SL}(g+1,\mathbb{C})\subset\mathbf{GL}(g+1,\mathbb{C}).$$
Here $(z'_{1,i},\dots,z'_{g,i})$ is a local coordinate of $U'_i$ where $\{U'_i\}$ is an open cover of  $\Jac(\mathfrak{R})$. The affine embedding transformation is given by
$$\xymatrix{(w_1,\dots,w_n,1)\ar[r]^(0.3){G_{ij}}& (w_1+(z'_{1j}-z'_{1i}),\dots,w_n+(z'_{gj}-z'_{gi}),1).}$$

Since $\TT^\ast\Jac(\mathfrak{R})\cong\Jac\mathfrak{R}\times\HH^{1,0}(\mathfrak{R})$ is a trivial bundle, $\TT\Jac(\mathfrak{R})\cong\Jac\mathfrak{R}\times\HH^{0,1}(\mathfrak{R})$ is a trivial bundle. Consider
$$\xymatrix{
\TT\mathfrak{R}\ar@{-->}^{\psi}[dr]\ar[r]^(0.4){\dAb}&\TT\Jac(\mathfrak{R})\ar[d]^{\phi_1}&\\
&\widehat{\Delta}\ar[r]^{\phi_2}&(\mathbb{C}^\ast)^{2g}}$$
Here $\phi_1$ is a diffeomorphism which is the identity on $\Jac(\mathfrak{R})$ and $\phi_2$ is a biholomorphism. From this diagram, we see that the tangent bundle $\TT\mathfrak{R}$ has an induced affine structure. Note that this procedure is nothing but giving a new complex structure to $\TT\Jac(\mathfrak{R})$ induced from $(\mathbb{C}^\ast)^{2g}$. Since the tangent bundle $\TT\mathfrak{R}$ is naturally embedded into $\TT\Jac(\mathfrak{R})$ by the Abel map, we may induce a new complex structure on the tangent bundle $\TT\mathfrak{R}$. Let us denote $\Delta:=\phi_1\circ\dAb(\TT\mathfrak{R})$. Hence, $\Delta$ is an affine bundle over $\mathfrak{R}$, i.e., a principal $\mathbb{G}_a$-bundle over $\mathfrak{R}$,  with this complex structure. Again, by the universal affine embedding of $\Delta$, we have a holomorphic vector bundle $\WW$ of rank $2$ with the structure group $\{G_{ij}\}$ of transition functions given by
$$G_{ij}=\begin{pmatrix}
1&z_j-z_i\\
0&1
\end{pmatrix}\in\mathbf{SL}(2,\mathbb{C})\subset\mathbf{GL}(2,\mathbb{C}).$$
Here $z_i$ is a local coordinate of $U_i$ where $\{U_i\}$ is an open cover of  $\mathfrak{R}$. The affine embedding transformation is given by
$$\xymatrix{(w,1)\ar[r]^(0.3){G_{ij}}& (w+(z_{j}-z_{i}),1).}$$
A vector bundle of rank $2$ with the set of transition functions  of type
$\{G_{ij}(q)=\begin{pmatrix}
a_{ij}(q)&b_{ij}(q)\\
0&c_{ij}(q)\end{pmatrix}\}$ gives a $\mathbb{P}^1$-bundle and consequently it gives a ruled surface (see \cite{maru70} for details). We will let
$$\mathfrak{S}_1:=\mathbb{P}(\WW).$$

On the other hands, in \cite{kim11} we have constructed a ruled surface $\mathfrak{S}$ by elementary transformations and showed the followings: Let $K_\mathfrak{R}=\sum_{i=1}^{2g-2}q_i$ be a canonical divisor of $\mathfrak{R}$. In particular, let us assume all the $q_i$ are distinct throughout the paper unless otherwise specified. Let $p_i=(\infty,q_i)$ for $i=1,\dots,2g-2$ and $p'_i=([-1,1],q_i)$ for $i=1,\dots,2g-2$. Note that $p_i\in\mathbb{P}^1\times\mathfrak{R}$ and we denote $\infty=[1,0]$.

\begin{theorem}\label{elco1}\cite{kim11}

Let $\{(U_i,z_i)\}$ be an open cover with local coordinates $z_i$ of $\mathfrak{R}$. The transition function $G_{ij}\in\mathbf{PGL}(2,\mathbb{C})$ on $U_i\cap U_j$ of a projective bundle

$$\elm_{p_1}\circ\dots\circ \elm_{p_{2g-2}}\circ\elm_{p'_1}\circ\dots\circ \elm_{p'_{2g-2}} \Big(\mathbb{P}^1\times\mathfrak{R}\Big)$$
is given by
$$G_{ij}(q)=\Big[\begin{pmatrix}
1&(g_{ij}(q)-1)z_i(q)\\
0&1\end{pmatrix}\Big]\in\mathbf{PGL}(2,\mathbb{C})$$
Here $\{g_{ij}(q)=\frac{z_j(q)}{z_i(q)}\}$ is the set of the transition functions of a line bundle $\mathcal{O}_\mathfrak{R}(K_\mathfrak{R})$.
\end{theorem}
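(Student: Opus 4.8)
The plan is to compute the $\mathbf{PGL}(2,\mathbb{C})$-cocycle of the iterated elementary transformation directly, chart by chart, by first isolating the effect of a single $\elm$ and then composing the $4g-4$ of them. I would work with the open cover $\{(U_i,z_i)\}$ adapted to the canonical divisor, so that each $U_i$ contains exactly one $q_i$, the coordinate $z_i$ vanishes at $q_i$, and $\{g_{ij}=z_j/z_i\}$ is the chosen cocycle for $\mathcal{O}_\mathfrak{R}(K_\mathfrak{R})=\mathcal{O}_\mathfrak{R}(\sum q_i)$. On $\mathbb{P}^1\times\mathfrak{R}$ the fiber coordinate is the affine $w$ with $\infty=[1:0]$, and the two marked points over $q_i$ are $w=\infty$ (for $p_i$) and $w=-1$ (for $p'_i$).

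First I would establish the local normal form of one elementary transformation. Realizing $\elm_p$ as the blow-up of $p$ followed by the blow-down of the strict transform of the fiber through $p$ (equivalently, as a Hecke modification of the underlying rank-$2$ bundle in the direction of the line in the fiber determined by $p$), one computes the new fiber coordinate in terms of the old one and $z_i$: for a center at a finite value $c$ over $q_i$ the new affine coordinate is $z_i/(w-c)$, while for the center at $\infty$ it is $z_i\,w$. These formulas, together with the bookkeeping of which marked section is sent to $0$ and which to $\infty$, are the only local input required.

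Next I would separate the roles of the two families. The transformations at the points $p_i=(\infty,q_i)$ twist the line subbundle spanned by $e_1$ by $\mathcal{O}_\mathfrak{R}(\sum q_i)=\mathcal{O}_\mathfrak{R}(K_\mathfrak{R})$; assembling the local frames $(e_1,\,z_i e_2)$ over all the $q_i$ produces the decomposable model $\mathbb{P}(\mathcal{O}\oplus\mathcal{O}_\mathfrak{R}(K_\mathfrak{R}))$, whose transition cocycle is the diagonal class $g_{ij}$. The transformations at the points $p'_i=([-1,1],q_i)$ then shear this decomposable structure into a nonsplit unipotent extension; the precise value $-1$ together with the base coordinate $z_i$ is what converts the diagonal $g_{ij}$ into the off-diagonal entry, fixing the affine (and not merely projective) normalization that is later used to cut out the Hitchin spectral curves.

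Finally I would compose and read off the cocycle. Keeping careful track of the trivialization adapted to the invariant section $w=\infty$ (rather than a naive global fiber coordinate, with respect to which the class is merely a coboundary), I expect the composite transition on $U_i\cap U_j$ to be
\[
G_{ij}=\Big[\begin{pmatrix}1 & (g_{ij}-1)z_i\\ 0 & 1\end{pmatrix}\Big]\in\mathbf{PGL}(2,\mathbb{C}),
\]
and I would conclude by recording the identity $(g_{ij}-1)z_i=z_j-z_i$, which exhibits this as the same cocycle as the bundle $\WW$ and hence identifies the two constructions. The main obstacle is precisely this last bookkeeping: because the two transformations over each $q_i$ lie on a single fiber, a careless choice of trivialization makes their net effect collapse to a constant-coefficient change of coordinate, so the delicate point is to propagate the coordinate attached to the section at infinity correctly across charts, ensuring that the $z_i$-factors coming from the $p_i$ assemble into the $\mathcal{O}_\mathfrak{R}(K_\mathfrak{R})$-cocycle while the $p'_i$ contribute exactly the shear, and then to check that the resulting matrix represents the asserted $\mathbf{PGL}(2,\mathbb{C})$ class.
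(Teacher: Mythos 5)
First, a caveat: the paper does not prove Theorem~\ref{elco1} at all --- it is quoted from \cite{kim11} --- so there is no proof here to compare yours against, and I can only assess your plan on its own terms. Your overall strategy (compute the $\mathbf{PGL}(2,\mathbb{C})$-cocycle chart by chart from the local normal form of a single elementary transformation, check that the $p_i$ alone produce $\mathbb{P}(\mathcal{O}\oplus\mathcal{O}_\mathfrak{R}(K_\mathfrak{R}))$, and record the identity $(g_{ij}-1)z_i=z_j-z_i$ matching the cocycle of $\WW$) is the natural one, and the intermediate claim about the $p_i$'s checks out: over $U_i$ the chart $w=z_iT$ and over $U_j$ the chart $w'=z_jT$ give exactly $w'=g_{ij}\,w$, i.e.\ the diagonal cocycle of $\mathcal{O}\oplus\mathcal{O}_\mathfrak{R}(K_\mathfrak{R})$.

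The genuine gap is the step you yourself flag and then skip: the composite of the two transformations centred on the \emph{same} fibre over $q_i$. After the first of $\elm_{p_i}$, $\elm_{p'_i}$ is performed, the original fibre through the other centre is contracted, so the second centre must be re-specified on the once-transformed surface, and different natural re-specifications give genuinely different answers. For instance, taking $u=z_i/(T+1)$ after $\elm_{p'_i}$ and then transforming at the point where the strict transform of the section $T=\infty$ meets the new fibre yields $w=z_i/u=T+1$, a base-independent shear and hence identity transition functions; other trackings of the centre produce cocycles such as $1/z_j-1/z_i$. You write that you ``expect'' the composite to come out as $(g_{ij}-1)z_i$ and that the delicate point is to ``propagate the coordinate attached to the section at infinity correctly,'' but that propagation is precisely the content of the theorem and is nowhere carried out. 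Relatedly, since each $z_i$ is holomorphic on $U_i$, the cocycle $\{z_j-z_i\}$ is a \v{C}ech coboundary, so the assertion only has content as a statement about a specific normalized trivialization (equivalently, about the affine structure on the complement of $C_0$); your proof must pin down that normalization before the ``final bookkeeping'' can even be checked. Note also that the composition as written applies the $\elm_{p'_i}$ first and the $\elm_{p_i}$ second, the reverse of the order in which you treat them, and since the two operations on a common fibre do not commute this must be settled as part of the same missing computation.
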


We will denote
$$\mathfrak{S}:=\elm_{p_1}\circ\dots\circ \elm_{p_{2g-2}}\circ\elm_{p'_1}\circ\dots\circ \elm_{p'_{2g-2}} \Big(\mathbb{P}^1\times\mathfrak{R}\Big).$$
From Theorem~\ref{elco1}, we can conclude that
$$\mathfrak{S}_1=\mathfrak{S}.$$
We will remark a couple of facts concerning the surface $\mathfrak{S}$ for reader's convenience. For more details, we refer \cite{kim11}: Let $C_0$ be a section of $\pi_\mathfrak{S}:\mathfrak{S}\to \mathfrak{R}$ corresponding to a trivial sub-bundle. It is obvious that $\HH^0(\mathfrak{R},\mathcal{W})\ne0$ where $\mathcal{W}$ is the sheaf of $\WW$. Moreover, $\HH^0(\mathfrak{R},\mathcal{W}\otimes\mathcal{L})=0$ for any line bundle $\mathcal{L}$ with negative degree. Hence, $\mathcal{W}$ is \emph{normalized} (see \emph{p.373} in \cite{har77}). Consequently,
$$C_0.C_0=qf.qf=0\text{ and }C_0.qf=1\text{ where }$$
$qf$ is a divisor which is the fiber $\pi^{-1}_{\mathfrak{S}}(q)$ of $\pi_\mathfrak{S}:\mathfrak{S}\to\mathfrak{R}$. Moreover, from \emph{p.373} in \cite{har77} we see that the canonical divisor
$$K_\mathfrak{S}\sim -2C_0+K_\mathfrak{R}f$$
where $K_\mathfrak{R}$ is a canonical divisor of $\mathfrak{R}$.

From these two different constructions of $\mathfrak{S}$, we can have one important observation: The ruled surface $\mathfrak{S}$ acquires a natural \emph{involution} $\tau$ when $\mathfrak{R}$ is a hyperelliptic curve: Let $\tau$ be a hyperelliptic involution of a given hyperelliptic curve $(\mathfrak{R},\tau)$. Then the hyperelliptic involution $\tau$ will induce an involution $\tau^\ast$ on $\Jac(\mathfrak{R})$. It is not hard to see that the natural involution $\tau'$ on $(\mathbb{C}^\ast)^{2g}$ given by $(z_1,\dots,z_{2g})\mapsto(-z_1,\dots,-z_{2g})$ induces $\tau^\ast$ by the affine bundle structure of short exact sequence \eqref{grse1} of commutative groups. Hence, we will denote the natural involution on  $(\mathbb{C}^\ast)^{2g}$ by $\tau$ by abusing a notation. By restricting to $\Ab(\mathfrak{R})$ which is obviously preserved by $\tau^\ast$, this will give an involution $\tau$ on $\mathfrak{S}=\mathbb{P}(\WW)$.

From Theorem~\ref{elco1}, we see that this involution $\tau$ on
$$\elm_{p_1}\circ\dots\circ \elm_{p_{2g-2}}\circ\elm_{p'_1}\circ\dots\circ \elm_{p'_{2g-2}} \Big(\mathbb{P}^1\times\mathfrak{R}\Big)$$
fixes $C_0$ and $K_\mathfrak{R}f$ set-theoretically and is nothing but the induced involution from an involution $(T,z)\mapsto (-T,\tau(z))$ on $\mathbb{P}^1\times\mathfrak{R}$ by elementary transformations. This involution $\tau$ will play a major role to study the finiteness of Hitchin hyperelliptic tangential covers over a given hyperelliptic curve.

\begin{remark}
From what we have shown, we may see that the construction is the generalization of the construction of a surface $S$ in \cite{tv90} which is a projectivization of rank $2$ bundle $\WW$ over an elliptic curve $X$. The bundle $\WW$ is an universal embedding bundle of a principal affine-bundle $\Delta$ over $X$:
$$0\to\mathbb{G}_a\to\Delta\to X\to 0.$$
\end{remark}

\section{Hitchin hyperelliptic Tangential Covers}\label{tansec3}
Throughout this paper, we assume that $q_i$ $i=1,\dots,2g-2$ in the canonical divisor $K_\mathfrak{R}=\sum_{i=1}^{2g-2}q_i$ are distinct and the ramification divisor of the involution $\tau$ of $(\mathfrak{R},\tau)$ do not intersect to the canonical divisor.

The involution $\tau$ constructed in Section~\ref{unisec1} has $4g+4$ fixed points which consist of two points over each ramification point of a hyperelliptic curve $\mathfrak{R}$ in the ruled surface $\pi_\mathfrak{S}:\mathfrak{S}\to\mathfrak{R}$. Note that $2g+2$ are on $C_0$ and $2g+2$ are on $\Delta$. We denote $s_i$ by those points on $C_0$ and $r_i$ by points in $\Delta$ for $i=1,\dots,2g+2$.

\begin{definition}
Let $(\mathfrak{R},\tau)$ be a hyperelliptic curve. We say that a Hitchin tangential cover $\pi:\widehat{\mathfrak{R}}\to \mathfrak{R}$ is {\em symmetric} if there is an involution $\tau'$ of $\widehat{\mathfrak{R}}$ under which $\pi^{-1}(K_\mathfrak{R})$ is invariant set-theoretically and which induces the canonical involution $\tau$ of $\mathfrak{R}$, i.e., $\pi\circ\tau'=\tau\circ\pi$.
\end{definition}

\begin{lemma}
Any Hitchin hyperelliptic tangential cover over a hyperelliptic curve is symmetric.
\end{lemma}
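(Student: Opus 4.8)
The plan is to take for $\tau'$ the \emph{hyperelliptic involution} of $\widehat{\mathfrak{R}}$ itself and to verify the two requirements in the definition of symmetry. A Hitchin tangential cover has genus $\widehat g\ge 2$ by Riemann--Hurwitz (since $g>1$), so its hyperelliptic involution $\tau'$ is unique and acts as $-\id$ on $\HH^0(\widehat{\mathfrak{R}},\Omega^1)$; likewise $\tau$ acts as $-\id$ on $\HH^0(\mathfrak{R},\Omega^1)$, and the canonical morphism $\phi_K\colon\mathfrak{R}\to\mathbb{P}^{g-1}$ factors as the degree-two quotient $\mathfrak{R}\to\mathfrak{R}/\tau\cong\mathbb{P}^1$ followed by the rational normal embedding, so that $\phi_K(p)=\phi_K(p')$ exactly when $p'\in\{p,\tau p\}$. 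These two facts about how $\tau,\tau'$ act on holomorphic one-forms are the only inputs I need.

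First I would compare the two degree-$n$ maps $\pi\circ\tau'$ and $\tau\circ\pi$ from $\widehat{\mathfrak{R}}$ to $\mathfrak{R}$ through their effect on a basis $\omega_0,\dots,\omega_{g-1}$ of $\HH^0(\mathfrak{R},\Omega^1)$. For every such $\omega$ one has
$$(\pi\circ\tau')^\ast\omega=(\tau')^\ast\pi^\ast\omega=-\pi^\ast\omega\quad\text{and}\quad(\tau\circ\pi)^\ast\omega=\pi^\ast\tau^\ast\omega=-\pi^\ast\omega,$$
because $\pi^\ast\omega$ is a holomorphic one-form on $\widehat{\mathfrak{R}}$ and is therefore negated by $\tau'$. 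Hence $\phi_K\circ(\pi\circ\tau')$ and $\phi_K\circ(\tau\circ\pi)$ are both given in homogeneous coordinates by $[-\pi^\ast\omega_0:\cdots:-\pi^\ast\omega_{g-1}]=[\pi^\ast\omega_0:\cdots:\pi^\ast\omega_{g-1}]$, so they coincide as maps to $\mathbb{P}^{g-1}$. Since $\phi_K$ has deck group $\{\id,\tau\}$, this forces, for each $x\in\widehat{\mathfrak{R}}$, the pointwise alternative $\pi(\tau'x)\in\{\tau(\pi x),\pi x\}$.

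The heart of the argument is to upgrade this pointwise dichotomy to a global identity. The two closed sets $A=\{x:\pi\tau'x=\tau\pi x\}$ and $B=\{x:\pi\tau'x=\pi x\}$ cover the connected curve $\widehat{\mathfrak{R}}$ and meet only in the finite set lying over the Weierstrass points of $\mathfrak{R}$; hence one of them is all of $\widehat{\mathfrak{R}}$. If $B=\widehat{\mathfrak{R}}$ then $\pi\circ\tau'=\pi$, so $\pi$ is constant on the $\tau'$-orbits and factors through $\widehat{\mathfrak{R}}/\tau'\cong\mathbb{P}^1$; but there is no nonconstant morphism $\mathbb{P}^1\to\mathfrak{R}$ since $g\ge 2$, contradicting that $\pi$ has degree $n$. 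Therefore $A=\widehat{\mathfrak{R}}$, i.e. $\pi\circ\tau'=\tau\circ\pi$. Finally $\pi^{-1}(K_\mathfrak{R})$ is $\tau'$-invariant: from $\pi\tau'=\tau\pi$ we get $\tau'\big(\pi^{-1}(K_\mathfrak{R})\big)=\pi^{-1}\big(\tau(K_\mathfrak{R})\big)$, and $\tau(K_\mathfrak{R})=K_\mathfrak{R}$ because, as recorded in Section~\ref{unisec1}, $\tau$ fixes $K_\mathfrak{R}f$ set-theoretically and hence permutes the points $q_i$. This exhibits the required involution, so the cover is symmetric.

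I expect the one genuinely delicate point to be the exclusion of the degenerate branch $\pi\circ\tau'=\pi$; this is precisely where the hypothesis $g\ge 2$ on the hyperelliptic \emph{base} enters, and it is the feature that makes the Treibich--Verdier mechanism carry over from the elliptic to the higher-genus setting. The differential comparison and the invariance of $K_\mathfrak{R}$ are routine by contrast.
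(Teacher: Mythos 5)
Your proof is correct, and it is genuinely more complete than, and logically opposite in direction to, the paper's own argument. The paper starts from the base: it asserts that $\tau$ on $\mathfrak{R}$ ``induces'' an involution $\tau'$ on $\widehat{\mathfrak{R}}$ and then claims that this induced involution is the hyperelliptic one, without explaining why such a lift exists or why it is hyperelliptic --- the nontrivial content is essentially left to the reader. You instead start from the cover: you take the (unique, since $g(\widehat{\mathfrak{R}})\ge 2$) hyperelliptic involution $\tau'$ of $\widehat{\mathfrak{R}}$, which exists by hypothesis, and \emph{prove} the compatibility $\pi\circ\tau'=\tau\circ\pi$ by comparing actions on holomorphic one-forms, pushing the identity through the canonical map $\phi_K$ of $\mathfrak{R}$ (two-to-one onto a rational normal curve with fibers the $\tau$-orbits), and then excluding the degenerate branch $\pi\circ\tau'=\pi$ by the absence of nonconstant maps $\mathbb{P}^1\to\mathfrak{R}$ for $g\ge 2$. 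The $\tau'$-invariance of $\pi^{-1}(K_\mathfrak{R})$ then follows from the $\tau$-invariance of $K_\mathfrak{R}$, which both you and the paper take as part of the setup. Your route buys a rigorous justification of exactly the step the paper glosses over, at the cost of invoking the standard facts about the canonical map of a hyperelliptic curve; the paper's route, if fleshed out, would presumably argue via the realization of $\widehat{\mathfrak{R}}$ as a divisor in the $\tau$-invariant surface $\mathfrak{S}$, which is closer in spirit to the rest of Section~\ref{tansec3} but is not actually carried out. Both yield the same commutative diagram and the same conclusion.
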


\begin{proof}
Let $(\mathfrak{R},\tau)$ be a hyperelliptic curve and $\pi:\widehat{\mathfrak{R}}\to \mathfrak{R}$ be a Hitchin hyperelliptic tangential cover. 
Clearly, we may take a canonical divisor $K_\mathfrak{R}$ set-theoretically invariant under $\tau$. The involution $\tau$ induces an involution $\tau'$ on $\widehat{\mathfrak{R}}$ and $\pi(\widehat{\mathfrak{R}}/\tau')=\mathbb{P}^1$. Moreover, $\tau'$ is a hyperelliptic involution of $\widehat{\mathfrak{R}}$. Consequently, we have a commutative diagram
$$\xymatrix{
\ar[d]_{\pi}\ar[r]\widehat{\mathfrak{R}}         &\ar[d]^{\pi_\tau} \widehat{\mathfrak{R}}/\tau'\cong \mathbb{P}^1\\
\ar[r]\mathfrak{R}                              & \mathfrak{R}/\tau\cong \mathbb{P}^1. 
}$$
\end{proof}

Let $\bl:\widetilde{\mathfrak{S}}\to\mathfrak{S}$ be the blow-up of $\mathfrak{S}$ at $\sigma(K_\mathfrak{R})$ where $\sigma:\mathfrak{R}\to C_0$ is the section. Since the canonical divisor $K_\mathfrak{R}$ is preserved by $\tau$, it is not hard to see that the involution $\tau$ on $\mathfrak{S}$ extends to $\widetilde{\tau}$ on $\widetilde{\mathfrak{S}}$. Let  $\bl':\widetilde{\mathfrak{S}}^\perp\to\widetilde{\mathfrak{S}}$ be the blow-up of $\widetilde{\mathfrak{S}}$ at $s_i$ and $r_i$ for $i=1,\dots,2g+2$ and consider the following diagram:

\begin{equation}\label{xyd1}
\xymatrix{
\ar[d]_{\bl'}\ar[r]^\varphi\widetilde{\mathfrak{S}}^\perp          &\ar[d]  \mathfrak{S}^\dagger\\
\ar[r]\ar[d]_{\bl}\widetilde{\mathfrak{S}}                              & \ar[d]\widetilde{\mathfrak{S}}/\widetilde{\tau} \\
\ar[r]\mathfrak{S}                               & \mathfrak{S}/\tau.
}
\end{equation}
Note that $\mathfrak{S}^\dagger$ is a minimal desingularization of $\widetilde{\mathfrak{S}}/\widetilde{\tau}$ and $\varphi$ is a covering of degree $2$ with the pullback $\sum_{i=1}^{2g+2}(s_{i}^{\perp}+r_{i}^{\perp})$ of the ramification divisor on $\widetilde{\mathfrak{S}}^\perp$ where $s_{i}^{\perp}$ and $r_{i}^{\perp}$ are exceptional curves for $i=1,\dots,2g+2$ after the blow-up of points $s_i$ and $r_i$ respectively.

\begin{lemma}\label{lemma7}
Let $\pi:\widehat{\mathfrak{R}}\to \mathfrak{R}$ be a Hitchin hyperelliptic tangential cover. There exists a unique $\widetilde{\tau}$-equivariant morphism $\widetilde{\iota}^\perp:\widehat{\mathfrak{R}}\to \widetilde{\mathfrak{S}}^\perp$.
\end{lemma}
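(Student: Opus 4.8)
The plan is to obtain $\widetilde{\iota}^\perp$ directly from the universal property of blowing up, exploiting that $\widehat{\mathfrak{R}}$ is a smooth curve, and then to read off equivariance from the uniqueness of the lift. First I would record the relevant data. As a Hitchin hyperelliptic tangential cover, $\pi:\widehat{\mathfrak{R}}\to\mathfrak{R}$ is realized as a $\tau$-invariant spectral divisor on $\mathfrak{S}$, so there is a morphism $\iota:\widehat{\mathfrak{R}}\to\mathfrak{S}$ with $\pi=\pi_\mathfrak{S}\circ\iota$. By the preceding lemma the cover is symmetric, and the involution $\tau'$ produced there is the restriction of $\tau$ to the invariant divisor $\iota(\widehat{\mathfrak{R}})$; hence $\iota$ is equivariant, $\iota\circ\tau'=\tau\circ\iota$. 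By the standing hypotheses of this section the two blow-up centres $\sigma(K_\mathfrak{R})$ and $\{s_i,r_i\}_{i=1}^{2g+2}$ are $\tau$-invariant and mutually disjoint, so $\widetilde{\tau}$ extends from $\widetilde{\mathfrak{S}}$ across $\bl'$ to an involution on $\widetilde{\mathfrak{S}}^\perp$, which I continue to denote $\widetilde{\tau}$.

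Next I would build the morphism. Because $\widehat{\mathfrak{R}}$ is smooth, every local ring $\mathcal{O}_{\widehat{\mathfrak{R}},y}$ is a discrete valuation ring, so for any closed subscheme the inverse-image ideal sheaf on $\widehat{\mathfrak{R}}$ is locally principal; since $\iota(\widehat{\mathfrak{R}})$ is a curve and is not contained in the finite centre of either blow-up, this ideal is a nonzero effective Cartier divisor. The universal property of the blow-up (\cite{har77}) then produces a unique $\widetilde{\iota}:\widehat{\mathfrak{R}}\to\widetilde{\mathfrak{S}}$ with $\bl\circ\widetilde{\iota}=\iota$, and a second application, to $\bl'$, gives a unique $\widetilde{\iota}^\perp:\widehat{\mathfrak{R}}\to\widetilde{\mathfrak{S}}^\perp$ with $\bl'\circ\widetilde{\iota}^\perp=\widetilde{\iota}$. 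Geometrically $\widetilde{\iota}^\perp$ maps $\widehat{\mathfrak{R}}$ onto the strict transform of the spectral curve, and both existence and uniqueness are part of this statement.

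Equivariance is then purely formal. Applying uniqueness at the first stage, $\widetilde{\tau}\circ\widetilde{\iota}$ and $\widetilde{\iota}\circ\tau'$ both lift $\tau\circ\iota=\iota\circ\tau'$ through $\bl$, hence coincide; applying it at the second stage, $\widetilde{\tau}\circ\widetilde{\iota}^\perp$ and $\widetilde{\iota}^\perp\circ\tau'$ both lift $\widetilde{\tau}\circ\widetilde{\iota}=\widetilde{\iota}\circ\tau'$ through $\bl'$, hence coincide, which is exactly the asserted $\widetilde{\tau}$-equivariance. The only step that genuinely needs care --- and the main obstacle --- is the first one: one must verify that the \emph{hyperelliptic} hypothesis forces the embedded spectral divisor $\iota(\widehat{\mathfrak{R}})$ to be honestly $\tau$-invariant, so that $\tau$ restricts to the involution $\tau'$ of the symmetry lemma and $\iota$ is equivariant. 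Once this, together with the smoothness of $\widehat{\mathfrak{R}}$, is in hand, the remainder is a formal consequence of the universal property of blowing up.
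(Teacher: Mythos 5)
Your argument is correct, but it reaches $\widetilde{\iota}^\perp$ by a genuinely different route than the paper. The paper does not lift $\widetilde{\iota}$ through $\bl'$ directly; instead it passes to quotients: since $\widetilde{\iota}$ commutes with the involutions, it descends to $\widetilde{\iota}/\widetilde{\tau}:\widehat{\mathfrak{R}}/\tau\to\widetilde{\mathfrak{S}}/\widetilde{\tau}$, the hyperellipticity of $\widehat{\mathfrak{R}}$ gives $\widehat{\mathfrak{R}}/\tau\cong\mathbb{P}^1$ smooth so that this map lifts to the minimal desingularization $\mathfrak{S}^\dagger$, and the commutativity of diagram~\eqref{xyd1} then produces $\widetilde{\iota}^\perp$ from the pair $(\widetilde{\iota},(\widetilde{\iota}/\widetilde{\tau})^\dagger)$; uniqueness is argued by coincidence with $\widetilde{\iota}$ on a dense open set, and equivariance from uniqueness, as in your write-up. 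Your route --- the universal property of blow-up applied to the smooth curve $\widehat{\mathfrak{R}}$, whose local rings are DVRs so that the inverse-image ideal of the finite centre is automatically invertible once no component maps into it --- is more elementary: it never uses $\mathfrak{S}^\dagger$ or the smoothness of the quotient, and would apply to any smooth curve mapping to $\mathfrak{S}$ compatibly with the involution, with hyperellipticity entering only through the symmetry lemma that supplies $\tau'$. What the paper's detour buys is the auxiliary morphism $(\widetilde{\iota}/\widetilde{\tau})^\dagger:\mathbb{P}^1\to\mathfrak{S}^\dagger$, which is what actually makes $\varrho(\widehat{\mathfrak{R}})$ a rational curve and is reused in Theorem~\ref{th11}. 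Both proofs share the same unproved hinge, namely that the embedding $\iota$ (not merely $\pi$) intertwines $\tau'$ with the involution on $\mathfrak{S}$; the paper asserts this without comment, and you are right to flag it as the one step that genuinely needs verification, though you too leave it unverified.
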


\begin{proof}
From \cite{kim11}, there is a morphism $\widetilde{\iota}:\widehat{\mathfrak{R}}\to \widetilde{\mathfrak{S}}$ of degree $1$. The morphism $\widetilde{\iota}$ commutes with $\widetilde{\tau}$, and by passing to the quotient, we have a morphism $\widetilde{\iota}/\widetilde{\tau}:\widehat{\mathfrak{R}}/\tau\to\widetilde{\mathfrak{S}}/\widetilde{\tau}$. Since $\widehat{\mathfrak{R}}$ is hyperelliptic, we have $\widehat{\mathfrak{R}}/\tau\cong\mathbb{P}^1$ and in particular $\widehat{\mathfrak{R}}/\tau$ is smooth. Hence the morphism $\widetilde{\iota}/\widetilde{\tau}$ covers a morphism $(\widetilde{\iota}/\widetilde{\tau})^\dagger:\widehat{\mathfrak{R}}/\tau\to\mathfrak{S}^\dagger$. Since diagram~\eqref{xyd1} is commutative, there is a morphism $\widetilde{\iota}^\perp: \widehat{\mathfrak{R}} \to\widetilde{\mathfrak{S}}^\perp$ covered by $\widetilde{\iota}$. The uniqueness of $\widetilde{\iota}^\perp$ follows from the fact that $\widetilde{\iota}^\perp$ coincides with $\widetilde{\iota}$ on a dense open set. The equivariance with respect to $\widetilde{\tau}$ follows from the uniqueness.
\end{proof}

From Lemma~\ref{lemma7}, we may define
\begin{equation}\label{eqrho}
\varrho(\widehat{\mathfrak{R}}):=\varphi\circ\widetilde{\iota}^\perp(\widehat{\mathfrak{R}}).
\end{equation}
Let $\widetilde{\iota}:\widehat{\mathfrak{R}}\to\widetilde{\mathfrak{S}}$ and $\mu_i(\widehat{\mathfrak{R}})=\mult_{r_i}\iota(\widehat{\mathfrak{R}})$ for $i=1,\dots,2g+2$ and
$$\mu(\widehat{\mathfrak{R}})=(\mu_i(\widehat{\mathfrak{R}})_{1\leq i\leq 2g+2})\in\mathbb{N}^{2g+2}.$$
We say that $\mu(\widehat{\mathfrak{R}})$ is \emph{the type of cover} $\widehat{\mathfrak{R}}$ and $\mu(\widehat{\mathfrak{R}})$ \emph{is adapted to} $n$ if
$$\mu_i(\widehat{\mathfrak{R}})\equiv n\mod 2\text{ for }i=1,\dots,2g+2.$$
Also we let
\begin{equation}\label{eq4}
L_{n,n,n}:=\bl^\ast(nC_0+nK_\mathfrak{R}f)-n\sum_{i=1}^{2g-2}E_i.
\end{equation}
We know that $L_{n,n,n}$ is smooth (see \cite{kim11}) and the strict transformation of $nC_0+nK_\mathfrak{R}f$.

\begin{lemma}\label{lemma2}
For all $n>2$ and all $\mu$ adapted to $n$, there exists a unique divisor $\lambda(n,\mu)\in\pic(\mathfrak{S}^\dagger)$ such that

\begin{equation}\label{eq2}
\varphi^\ast(\lambda(n,\mu))=\bl'^\ast(L_{n,n,n})-\sum_{i=1}^{2g+2}\mu_i r_{i}^{\perp}.
\end{equation}
\end{lemma}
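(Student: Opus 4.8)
The plan is to construct $\lambda(n,\mu)$ by descent along the degree-$2$ cover $\varphi:\widetilde{\mathfrak{S}}^\perp\to\mathfrak{S}^\dagger$. First I would observe that $\varphi$ is a double cover ramified exactly along $\sum_{i=1}^{2g+2}(s_i^\perp+r_i^\perp)$, so $\varphi$ is the quotient by the involution $\widetilde{\tau}^\perp$ on $\widetilde{\mathfrak{S}}^\perp$ lifting $\widetilde{\tau}$. The standard theory of such covers tells us that a divisor class $D\in\pic(\widetilde{\mathfrak{S}}^\perp)$ descends to a class in $\pic(\mathfrak{S}^\dagger)$ (i.e.\ equals $\varphi^\ast$ of some class) precisely when $D$ is $\widetilde{\tau}^\perp$-invariant in $\pic$, modulo the subtlety coming from the ramification curves. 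Concretely, the image $\varphi^\ast\pic(\mathfrak{S}^\dagger)$ consists of the $\widetilde{\tau}^\perp$-invariant classes, with the caveat that the ramification components $s_i^\perp, r_i^\perp$ each pull back to twice an irreducible curve downstairs; so invariance together with the correct parity of the coefficients of the $r_i^\perp$ is exactly what one needs. This is why the hypothesis that $\mu$ is \emph{adapted to} $n$, i.e.\ $\mu_i\equiv n\bmod 2$, must enter.

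The key steps, in order, are as follows. First I would set $D:=\bl'^\ast(L_{n,n,n})-\sum_{i=1}^{2g+2}\mu_i r_i^\perp$ and verify that $D$ is $\widetilde{\tau}^\perp$-invariant in $\pic(\widetilde{\mathfrak{S}}^\perp)$. For the term $\bl'^\ast(L_{n,n,n})$ this reduces to the $\widetilde{\tau}$-invariance of $L_{n,n,n}=\bl^\ast(nC_0+nK_\mathfrak{R}f)-n\sum E_i$, which holds because the involution $\tau$ on $\mathfrak{S}$ fixes $C_0$ and $K_\mathfrak{R}f$ set-theoretically and permutes the exceptional curves $E_i$ compatibly (the blow-up is at the $\tau$-invariant divisor $\sigma(K_\mathfrak{R})$). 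For the terms $\mu_i r_i^\perp$ I would use that each $r_i^\perp$ is an exceptional curve over a fixed point $r_i$ of $\widetilde{\tau}$, hence is itself $\widetilde{\tau}^\perp$-invariant as a class. Second, I would invoke the descent criterion for the double cover to produce a class $\lambda(n,\mu)\in\pic(\mathfrak{S}^\dagger)$ with $\varphi^\ast\lambda(n,\mu)$ equal to $D$; here the parity condition $\mu_i\equiv n\bmod 2$ guarantees the coefficient of $r_i^\perp$ is compatible with the fact that $r_i^\perp$ is a ramification component and $\varphi^\ast$ of the corresponding downstairs curve is $2r_i^\perp$. Third, for uniqueness I would argue that $\varphi^\ast:\pic(\mathfrak{S}^\dagger)\to\pic(\widetilde{\mathfrak{S}}^\perp)$ is injective: since $\varphi$ is a finite surjective morphism of degree $2$ between smooth projective surfaces, $\varphi_\ast\varphi^\ast=2\cdot\id$ on $\pic$, so $\varphi^\ast$ has no torsion in its kernel, and any two preimages of $D$ differ by a class killed by $\varphi^\ast$, hence by $2$, hence (after checking the relevant group is torsion-free, or arguing $2$-divisibility directly) are equal.

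The main obstacle I expect is the precise bookkeeping of the ramification curves in the descent criterion, i.e.\ pinning down exactly which $\widetilde{\tau}^\perp$-invariant classes lie in the image of $\varphi^\ast$. For an \'etale double cover the image is simply the invariant subgroup, but here $\varphi$ is ramified along the $s_i^\perp$ and $r_i^\perp$, and along a ramification curve $R$ one has $\varphi^\ast(\overline{R})=2R$ where $\overline{R}=\varphi(R)$; so a class with an \emph{odd} coefficient on some $r_i^\perp$ cannot descend, which is exactly the content of the adaptedness hypothesis. I would therefore spend care verifying that, modulo the invariant part generated by $\bl'^\ast$ of classes from below and the half-ramification classes $\tfrac12\varphi^\ast\overline{r_i^\perp}=r_i^\perp$, the prescribed right-hand side of \eqref{eq2} is genuinely in the image. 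The remaining verifications — invariance of $L_{n,n,n}$ and injectivity of $\varphi^\ast$ — are comparatively routine and mirror the corresponding arguments in the Treibich--Verdier setting \cite{tv90}.
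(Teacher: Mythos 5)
Your overall strategy is sound and the central insight --- that the adaptedness condition $\mu_i\equiv n\bmod 2$ is precisely what permits descent across the ramification curves of the double cover $\varphi$ --- matches the role that hypothesis plays in the paper. But your route to existence is genuinely different from the paper's. The paper does not invoke a descent criterion at all: it simply \emph{exhibits} $\lambda(n,\mu)$ by an explicit formula, namely $\lambda(n,\mu)=nC_0^\dagger+(nK_\mathfrak{R}f)^\dagger+\frac{n}{2}\sum s_i^\dagger-\sum\frac{\mu_i}{2}r_i^\dagger$ for $n$ even, and a variant for $n$ odd involving the image $C_1^\dagger$ of the unique symmetric curve $C_1\subset\mathfrak{S}$ isomorphic to $\mathfrak{R}$, and then verifies \eqref{eq2} by computing $\varphi^\ast$ of each constituent. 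Your abstract approach buys generality and explains conceptually \emph{why} the parity hypothesis is needed; the paper's explicit formula buys immediate verifiability and, crucially, is what gets reused in the intersection computations of Lemmas~\ref{simlemma} and \ref{simle2}.

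Two points in your proposal need repair. First, your parity bookkeeping only tracks the curves $r_i^\perp$, but the $s_i^\perp$ are equally components of the ramification divisor of $\varphi$, and a representative of $\bl'^\ast(L_{n,n,n})$ (e.g.\ the total transform of $n\widetilde{C_0}+\cdots$) passes through each $s_i$ with multiplicity $n$. The correct descent condition is that the local signs along \emph{all} $4g+4$ fixed curves agree, i.e.\ $(-1)^n=(-1)^{\mu_i}$ for every $i$; this is exactly where the ``$n$'' in ``adapted to $n$'' enters, and it is why the paper must split into the cases $n$ even and $n$ odd and introduce $C_1$ in the odd case. As written, your criterion would not explain why the condition compares $\mu_i$ to $n$ rather than merely requiring the $\mu_i$ to be even. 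Second, your uniqueness argument correctly reduces to showing that the $2$-torsion kernel of $\varphi^\ast$ vanishes, but you leave the torsion-freeness of $\pic(\mathfrak{S}^\dagger)$ unverified. This is not automatic: the paper proves it by showing $\HH^0(\mathfrak{S}^\dagger,\Omega^1_{\mathfrak{S}^\dagger})=0$ and $\HH^0(\mathfrak{S}^\dagger,(\Omega^2_{\mathfrak{S}^\dagger})^{\otimes 2})=0$ (using that $\tau$ acts by $-1$ on $\HH^0(\mathfrak{R},\Omega^1_\mathfrak{R})$), concluding via the Castelnuovo--Enriques criterion that $\mathfrak{S}^\dagger$ is rational. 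Some such argument must be supplied for your uniqueness step to close.
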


\begin{proof}
First let us prove that $\mathfrak{S}^\dagger$ is a rational surface: From diagram~\eqref{xyd1}, we have two injections
$$\HH^0(\mathfrak{S}^\dagger,\Omega^{1}_{\mathfrak{S}^\dagger})\hookrightarrow\HH^0(\widetilde{\mathfrak{S}}^\perp,\Omega^{1}_{\widetilde{\mathfrak{S}}^\perp})^\tau\text{ and }\HH^0(\mathfrak{S}^\dagger,(\Omega^{2}_{\mathfrak{S}^\dagger})^2)\hookrightarrow\HH^0(\widetilde{\mathfrak{S}}^\perp,(\Omega^{2}_{\widetilde{\mathfrak{S}}^\perp})^2).$$
 It is clear that $\widetilde{\mathfrak{S}}^\perp$ is birational to $\mathfrak{R}\times\mathbb{P}^1$ and the involution $\tau$ acts by $-1$ on
$$\HH^0(\mathfrak{R}\times\mathbb{P}^1,\Omega^{1}_{\mathfrak{R}\times\mathbb{P}^1})\cong\HH^0(\mathfrak{R},\Omega^{1}_{\mathfrak{R}}).$$
Hence, $\HH^0(\widetilde{\mathfrak{S}}^\perp,\Omega^{1}_{\widetilde{\mathfrak{S}}^\perp})^\tau\cong\HH^0(\mathfrak{R},\Omega^{1}_{\mathfrak{R}})^\tau=0$. Moreover, we have
$$\HH^0(\widetilde{\mathfrak{S}}^\perp,(\Omega^{2}_{\widetilde{\mathfrak{S}}^\perp})^2)\cong\HH^0(\mathfrak{R},(\Omega^{1}_{\mathfrak{R}})^2)\otimes\HH^0(\mathbb{P}^1,(\Omega^{1}_{\mathbb{P}^1})^2)=0.$$
Hence, by the Castelnuovo-Enriques criterion, we conclude that  $\mathfrak{S}^\dagger$ is a rational surface. Form this, we see that $\pic(\mathfrak{S}^\dagger)$ has no torsion. Consequently, $\varphi^\ast:\pic(\mathfrak{S}^\dagger)\to\pic(\widetilde{\mathfrak{S}}^\perp)$ is injective. Hence we prove the uniqueness of $\lambda(n,\mu)$ if it exists. The existence will follow from an explicit construction. We may let for $n$ even
$$\lambda(n,\mu)=nC_{0}^{\dagger}+(nK_\mathfrak{R}f)^\dagger+\frac{n}{2}\sum_{i=1}^{2g+2}s_{i}^{\dagger}-\sum_{i=1}^{2g+2}\frac{\mu_i}{2} r_{i}^{\dagger}.$$
Let $C_1$ be a unique symmetric divisor on $\mathfrak{S}$ which is isomorphic to the hyperelliptic curve $\mathfrak{R}$. Then for $n$ odd we may let 
$$\lambda(n,\mu)=C_{1}^{\dagger}+(n-1)C_{0}^{\dagger}+((n-1)K_\mathfrak{R}f)^\dagger+\frac{n-1}{2}\sum_{i=1}^{2g+2}s_{i}^{\dagger}-\sum_{i=1}^{2g+2}\frac{\mu_i-1}{2} r_{i}^{\dagger}.$$
It is not hard to see that

$$\begin{aligned}
\varphi^\ast(s_{i}^{\dagger})&=2s_{i}^{\perp},\varphi^\ast(r_{i}^{\dagger})=2r_{i}^{\perp}\\
\varphi^\ast(nC_{0}^{\dagger}+(nK_\mathfrak{R}f)^\dagger)&=\bl'^\ast\circ\bl^\ast(nC_0+nK_\mathfrak{R}f)-n\sum_{i=1}^{2g+2}s_{i}^{\perp}-n\sum_{i=1}^{2g-2}E_i\\
\varphi^\ast(C_{1}^{\dagger})&\equiv\bl'^\ast\circ\bl^\ast(C_0+K_\mathfrak{R}f)-\sum_{i=1}^{2g+2}(s_{i}^{\perp}+r_{i}^{\perp})-\sum_{i=1}^{2g-2}E_i.\\
\end{aligned}$$
Note that $\equiv$ in the above expression means a linear equivalence. Using the above, it is easy to see that
$$\varphi^\ast(\lambda(n,\mu))=\bl'^\ast(L_{n,n,n})-\sum_{i=1}^{2g+2}\mu_i r_{i}^{\perp}.$$
\end{proof}

Let $\mathscr{HHT}(n,g,\mathfrak{S},\mu)$ be the moduli space of Hitchin hyperelliptic tangential covers $\pi:\widehat{\mathfrak{R}}\to \mathfrak{R}$ of type of cover $\mu$ where the genus of $\mathfrak{R}$ is $g$ and $\deg(\pi)=n$. Note that an element of  $\mathscr{HHT}(n,g,\mathfrak{S},\mu)$ is a birational class of $\widehat{\mathfrak{R}}$.

\begin{theorem}\label{th11}
There is a one-to-one correspondence between  $\mathscr{HHT}(n,g,\mathfrak{S},\mu)$ and $|\lambda(n,\mu)|$ defined by
$$\widehat{\mathfrak{R}}\mapsto \varrho(\widehat{\mathfrak{R}}).$$
In particular, $\lambda(n,\mu)$ is a rational curve.
\end{theorem}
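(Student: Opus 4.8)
The plan is to prove the asserted bijection by showing that the assignment $\widehat{\mathfrak{R}}\mapsto\varrho(\widehat{\mathfrak{R}})$ is a well-defined map into $|\lambda(n,\mu)|$, that it takes rational values, and that it is both injective and surjective; the inverse will be reconstructed from a divisor by pulling back along $\varphi$ and pushing the result down to $\mathfrak{S}$. Throughout I use that $\mathfrak{S}^\dagger$ is rational, so $\pic(\mathfrak{S}^\dagger)$ is torsion-free and $\varphi^\ast$ is injective, as established in the proof of Lemma~\ref{lemma2}.

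\emph{The map is well defined and rational-valued.} By Lemma~\ref{lemma7} the morphism $\widetilde{\iota}^\perp\colon\widehat{\mathfrak{R}}\to\widetilde{\mathfrak{S}}^\perp$ is $\widetilde{\tau}$-equivariant, and from \cite{kim11} $\widetilde{\iota}$ has degree $1$ with image in the class $L_{n,n,n}$ and multiplicity $\mu_i=\mult_{r_i}\iota(\widehat{\mathfrak{R}})$ at each $r_i$. The standing hypothesis that the ramification of $\tau$ is disjoint from $K_\mathfrak{R}$ forces $\widetilde{\iota}(\widehat{\mathfrak{R}})$, whose intersection with $C_0$ lies over $K_\mathfrak{R}$, to avoid the points $s_i$; hence no $s_i^\perp$-term appears and the strict transform has class $\widetilde{\iota}^\perp(\widehat{\mathfrak{R}})=\bl'^\ast(L_{n,n,n})-\sum_{i=1}^{2g+2}\mu_i r_i^\perp$. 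Since this curve is $\widetilde{\tau}$-invariant and maps with degree $2$ onto its image under the quotient $\varphi$, we have $\varphi^\ast\big(\varrho(\widehat{\mathfrak{R}})\big)=\widetilde{\iota}^\perp(\widehat{\mathfrak{R}})$, which by \eqref{eq2} equals $\varphi^\ast(\lambda(n,\mu))$. Injectivity of $\varphi^\ast$ then yields $\varrho(\widehat{\mathfrak{R}})\sim\lambda(n,\mu)$, so $\varrho(\widehat{\mathfrak{R}})\in|\lambda(n,\mu)|$. Finally $\varrho(\widehat{\mathfrak{R}})=\varphi(\widetilde{\iota}^\perp(\widehat{\mathfrak{R}}))$ is the image of the quotient $\widehat{\mathfrak{R}}/\tau'\cong\mathbb{P}^1$, so it is a rational curve; as this holds for every member of $|\lambda(n,\mu)|$ by surjectivity below, $\lambda(n,\mu)$ is rational.

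\emph{Injectivity.} If $\varrho(\widehat{\mathfrak{R}})=\varrho(\widehat{\mathfrak{R}}')$ in $\mathfrak{S}^\dagger$, applying $\varphi^\ast$ and the identity above recovers $\widetilde{\iota}^\perp(\widehat{\mathfrak{R}})=\widetilde{\iota}^\perp(\widehat{\mathfrak{R}}')$ as curves in $\widetilde{\mathfrak{S}}^\perp$. Because $\widetilde{\iota}^\perp$ has degree $1$, each of $\widehat{\mathfrak{R}}$ and $\widehat{\mathfrak{R}}'$ is the normalization of this common image, so they are birational and hence define the same class in $\mathscr{HHT}(n,g,\mathfrak{S},\mu)$.

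\emph{Surjectivity, the main obstacle.} Given $D\in|\lambda(n,\mu)|$, its pullback $\varphi^\ast(D)$ is a $\widetilde{\tau}$-invariant effective divisor in $|\bl'^\ast(L_{n,n,n})-\sum_i\mu_i r_i^\perp|$ by \eqref{eq2}; pushing it down by $\bl\circ\bl'$ gives a curve $C\subset\mathfrak{S}$ of class $nC_0+nK_\mathfrak{R}f$ with $\mult_{r_i}C=\mu_i$, and I would take $\widehat{\mathfrak{R}}$ to be its normalization. The $\widetilde{\tau}$-invariance endows $\widehat{\mathfrak{R}}$ with an involution $\tau'$ inducing $\tau$ and with $\widehat{\mathfrak{R}}/\tau'\cong\mathbb{P}^1$, so $\widehat{\mathfrak{R}}$ is symmetric, and by construction $\varrho(\widehat{\mathfrak{R}})=D$. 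The genuinely hard point is to verify that $\widehat{\mathfrak{R}}$ lies in $\mathscr{HHT}(n,g,\mathfrak{S},\mu)$, namely that $C$, viewed as a divisor on $\mathfrak{S}=\mathbb{P}(\WW)$, satisfies the Hitchin tangency condition of \cite{kim11} and has type of cover exactly $\mu$. Here the hypothesis that $\mu$ is adapted to $n$ is indispensable: it is what makes the halved coefficients in $\lambda(n,\mu)$ integral in $\pic(\mathfrak{S}^\dagger)$ and matches the parity of the ramification of $\tau'$ over the $r_i$ imposed by a degree-$n$ cover. I expect the even and odd cases of $n$ to require separate arguments along the two formulas for $\lambda(n,\mu)$ in Lemma~\ref{lemma2} (the odd case involving the symmetric divisor $C_1$), and the verification of the tangency condition of \cite{kim11}, rather than the formal class bookkeeping, to be the principal difficulty.
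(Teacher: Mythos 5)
Your proposal is correct and follows essentially the same route as the paper: well-definedness via Lemma~\ref{lemma7}, the class computation of Lemma~\ref{lemma2}, and injectivity of $\varphi^\ast$; surjectivity by pulling a member of $|\lambda(n,\mu)|$ back along $\varphi$ and normalizing (the paper phrases this as the fiber product $C^\dagger\times_{\mathfrak{S}^\dagger}\widetilde{\mathfrak{S}}^\perp$, which is the same construction); and injectivity by recovering the curve in $\widetilde{\mathfrak{S}}^\perp$ from its image. The one step you flag as the principal remaining difficulty --- verifying that the curve produced in the surjectivity argument actually satisfies the Hitchin tangency condition of \cite{kim11} --- is exactly the step the paper itself dispatches with ``it is not hard to see,'' so your proposal is no less complete than the published argument.
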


\begin{proof}

From \eqref{eqrho}, we have defined $\varrho(\widehat{\mathfrak{R}}):=\varphi\circ\widetilde{\iota}^\perp(\widehat{\mathfrak{R}})$ which is a rational curve. Since $\widetilde{\iota}(\widehat{\mathfrak{R}})\in|L_{n,n,n}|$, we have the strict transformation of $\widetilde{\iota}(\widehat{\mathfrak{R}})$ by Lemma~\ref{lemma7};
$$\widetilde{\iota}^\perp(\widehat{\mathfrak{R}})\in|\bl'^\ast(L_{n,n,n})-\sum_{i=1}^{2g+2}\mu_i r_{i}^{\perp}|.$$
Hence, we have by Lemma~\ref{lemma2}
$$\widetilde{\iota}^\perp(\widehat{\mathfrak{R}})=\varphi^\ast(\lambda(n,\mu)).$$
Again by the uniqueness of $\lambda(n,\mu)$ in Lemma~\ref{lemma2}, we have $\varrho(\widehat{\mathfrak{R}})\in|\lambda(n,\mu)|$.

Surjectivity of $\varrho$: For a rational curve $C\in|\lambda(n,\mu)|$ with normalization $C^\dagger$,
$$\Gamma=C^\dagger\times_{\mathfrak{S}^\dagger}\widetilde{\mathfrak{S}}^\perp.$$
Then we have a map $\widetilde{\iota}^\perp:\Gamma\to\widetilde{\mathfrak{S}}^\perp$. Define
$$\pi=\pi_\mathfrak{S}\circ\bl\circ\bl'\circ\widetilde{\iota}^\perp.$$
It is not hard to see that $\pi:\Gamma\to\mathfrak{R}$ is a Hitchin hyperelliptic tangential cover.

Injectivity of $\varrho$: Let $\varrho(\widehat{\mathfrak{R}})^\dagger$ be the normalization of $\varrho(\widehat{\mathfrak{R}})$ where $[\widehat{\mathfrak{R}}]\in\mathscr{HHT}(n,g,\mathfrak{S},\mu)$ where  $[\widehat{\mathfrak{R}}]$ is a birational class. As in the proof of the surjectivity, we may construct a Hitchin hyperelliptic tangential cover
$$\varrho(\widehat{\mathfrak{R}})^\dagger\times_{\mathfrak{S}^\dagger}\widetilde{\mathfrak{S}}^\perp.$$
In order to show the injectivity, it suffices to show that there is $\Gamma\in[\widehat{\mathfrak{R}}]$ such that
$$\Gamma=\varrho(\widehat{\mathfrak{R}})^\dagger\times_{\mathfrak{S}^\dagger}\widetilde{\mathfrak{S}}^\perp.$$
Clearly, the image of $\Gamma$ in $\widetilde{\mathfrak{S}}$ is the image of $\widehat{\mathfrak{R}}$. Hence, we have a birational map $\widehat{\mathfrak{R}}\to\Gamma$. Hence, $\Gamma$ is uniquely determined by $\varrho(\widehat{\mathfrak{R}})^\dagger$.
\end{proof}

\section{Finiteness of Hitchin hyperelliptic covers}\label{secfin4}

The canonical divisor of $\widetilde{\mathfrak{S}}$ is given by
\begin{equation}\label{eq3}
K_ {\widetilde{\mathfrak{S}}}=\bl^\ast(K_\mathfrak{S})+\sum_{i=1}^{2g-2}E_i=\bl^\ast(-2C_0+K_\mathfrak{R}f)+\sum_{i=1}^{2g-2}E_i.
\end{equation}
Since $\varphi$ is ramified along $\sum_{i=1}^{2g+2}(s_{i}^{\perp}+r_{i}^{\perp})$, we have
\begin{equation}\label{eq1}
\varphi^\ast(K_{\mathfrak{S}^\dagger})=K_{\widetilde{\mathfrak{S}}^\perp}-\sum_{i=1}^{2g+2}(s_{i}^{\perp}+r_{i}^{\perp})=\bl'^\ast(K_{\widetilde{\mathfrak{S}} }).
\end{equation}
Note the facts that $E_i.E_j=s_{i}^{\perp}.s_{j}^{\perp}=r_{i}^{\perp}.r_{j}^{\perp}=-\delta_{ij}$, $s_{i}^{\perp}.r_{j}^{\perp}=0$, $\bl^\ast(D_1).E_i=0$, and $\bl^\ast(D_1).\bl^\ast(D_2)=D_1.D_2$ for $D_1,D_2\in\pic(\mathfrak{S})$ where $\delta_{ij}$ is the Kronecker symbol.

\begin{lemma}\label{simlemma}
$$\lambda(n,\mu).K_{\mathfrak{S}^\dagger}=0.$$
\end{lemma}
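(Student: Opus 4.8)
The plan is to reduce the intersection number $\lambda(n,\mu).K_{\mathfrak{S}^\dagger}$ to a computation on $\widetilde{\mathfrak{S}}^\perp$ by pulling everything back through the degree-$2$ covering $\varphi$. Since $\varphi$ is a finite morphism of degree $2$, for any two divisor classes $A,B\in\pic(\mathfrak{S}^\dagger)$ we have the projection formula $\varphi^\ast(A).\varphi^\ast(B)=2(A.B)$. Applying this with $A=\lambda(n,\mu)$ and $B=K_{\mathfrak{S}^\dagger}$, it suffices to show that
$$\varphi^\ast(\lambda(n,\mu)).\varphi^\ast(K_{\mathfrak{S}^\dagger})=0,$$
and then divide by $2$. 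Both factors on the left have already been expressed in the excerpt: from Lemma~\ref{lemma2} we have $\varphi^\ast(\lambda(n,\mu))=\bl'^\ast(L_{n,n,n})-\sum_{i=1}^{2g+2}\mu_i r_{i}^{\perp}$, and from \eqref{eq1} we have $\varphi^\ast(K_{\mathfrak{S}^\dagger})=\bl'^\ast(K_{\widetilde{\mathfrak{S}}})$.

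First I would substitute these two expressions and expand the intersection product. Using the orthogonality relations collected after \eqref{eq1}, namely $\bl'^\ast(D).r_{i}^{\perp}=0$ for any $D\in\pic(\widetilde{\mathfrak{S}})$ (since $r_i^\perp$ is an exceptional curve of $\bl'$ and $K_{\widetilde{\mathfrak{S}}}$ is pulled back from $\widetilde{\mathfrak{S}}$), the cross terms involving $\sum_i \mu_i r_i^\perp$ against $\bl'^\ast(K_{\widetilde{\mathfrak{S}}})$ vanish. Therefore the computation collapses to
$$\varphi^\ast(\lambda(n,\mu)).\varphi^\ast(K_{\mathfrak{S}^\dagger})=\bl'^\ast(L_{n,n,n}).\bl'^\ast(K_{\widetilde{\mathfrak{S}}})=L_{n,n,n}.K_{\widetilde{\mathfrak{S}}},$$
where the last equality again uses that $\bl'^\ast$ preserves intersection numbers. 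So the whole problem descends to the single surface $\widetilde{\mathfrak{S}}$, and I must verify $L_{n,n,n}.K_{\widetilde{\mathfrak{S}}}=0$.

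Next I would carry out this intersection on $\widetilde{\mathfrak{S}}$ using the explicit formulas \eqref{eq4} and \eqref{eq3}. Writing $L_{n,n,n}=\bl^\ast(nC_0+nK_\mathfrak{R}f)-n\sum_{i=1}^{2g-2}E_i$ and $K_{\widetilde{\mathfrak{S}}}=\bl^\ast(-2C_0+K_\mathfrak{R}f)+\sum_{i=1}^{2g-2}E_i$, I would expand the product. The exceptional curves are orthogonal to pullbacks ($\bl^\ast(D).E_i=0$) and satisfy $E_i.E_j=-\delta_{ij}$, so the $E$-contribution is $-n\sum_i E_i.\sum_j E_j = -n\cdot(-(2g-2))=n(2g-2)$, while the pullback part contributes $(nC_0+nK_\mathfrak{R}f).(-2C_0+K_\mathfrak{R}f)$ computed on $\mathfrak{S}$ via $C_0.C_0=0$, $C_0.f=1$, $f.f=0$, and $\deg K_\mathfrak{R}=2g-2$. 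The expected cancellation is that the pullback term equals $-n(2g-2)$, yielding total $0$.

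The main obstacle I anticipate is not conceptual but bookkeeping: correctly identifying which exceptional divisors appear and how they pair. In particular one must be careful that the intersection $L_{n,n,n}.K_{\widetilde{\mathfrak{S}}}$ really involves only the $E_i$ from the blow-up $\bl$ (not the $s_i^\perp,r_i^\perp$ from $\bl'$), and that all cross terms vanish by the stated orthogonality relations. I would double-check the sign and coefficient in the $C_0.K_\mathfrak{R}f$ and $K_\mathfrak{R}f.C_0$ terms, since these are where an arithmetic slip would break the cancellation. Once the two pieces are seen to be $n(2g-2)$ and $-n(2g-2)$, the conclusion $\lambda(n,\mu).K_{\mathfrak{S}^\dagger}=0$ follows immediately upon dividing by $2$.
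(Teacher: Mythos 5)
Your proposal is correct and follows essentially the same route as the paper's proof: both reduce $2\lambda(n,\mu).K_{\mathfrak{S}^\dagger}$ to $\varphi^\ast(\lambda(n,\mu)).\varphi^\ast(K_{\mathfrak{S}^\dagger})$ via the degree-$2$ projection formula, substitute the expressions from Lemma~\ref{lemma2} and \eqref{eq1}, kill the $r_i^\perp$ cross terms by orthogonality, and verify the cancellation $n(2g-2)-2n(2g-2)+n(2g-2)=0$ on $\widetilde{\mathfrak{S}}$. No gaps.
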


\begin{proof}
Note that $\varphi_{\ast}\circ\varphi^\ast(\lambda(n,\mu))=2\lambda(n,\mu)$. Consequently,
$$\begin{aligned}
2\lambda(n,\mu).K_{\mathfrak{S}^\dagger}&=\varphi_{\ast}\circ\varphi^\ast(\lambda(n,\mu)).K_{\mathfrak{S}^\dagger}=\varphi^\ast(\lambda(n,\mu)).\varphi^\ast(K_{\mathfrak{S}^\dagger})\\
&\overset{\eqref{eq1},\text{Lemma}~\ref{lemma2}}{=}L_{n,n,n}^{\perp}.\bl'^\ast(K_{\widetilde{\mathfrak{S}} })\\
&\overset{\eqref{eq3},\eqref{eq2}}{=}\Big(\bl'^\ast(L_{n,n,n})-\sum_{i=1}^{2g+2}\mu_i r_{i}^{\perp}\Big).\bl'^\ast\Big(\bl^\ast(-2C_0+K_\mathfrak{R}f)+\sum_{i=1}^{2g-2}E_i\Big)\\
&\overset{\eqref{eq4}}{=}\Big(\bl^\ast(nC_0+nK_\mathfrak{R}f)-n\sum_{i=1}^{2g-2}E_i\Big).\Big(\bl^\ast(-2C_0+K_\mathfrak{R}f)+\sum_{i=1}^{2g-2}E_i\Big)\\
&=n(2g-2)-2n(2g-2)+n(2g-2)=0.
\end{aligned}$$
\end{proof}

\begin{lemma}\label{simle2}
Let $\mu^{(2)}:=\sum_{i=1}^{2g+2}\mu_{i}^{2}$. We have
$$\lambda(n,\mu)^2=\frac{n^2(2g-2)-\mu^{(2)}}{2}.$$
\end{lemma}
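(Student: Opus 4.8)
The plan is to compute $\lambda(n,\mu)^2$ by pulling back to $\widetilde{\mathfrak{S}}^\perp$ via $\varphi$, exactly as in the proof of Lemma~\ref{simlemma}, since the intersection pairing on $\widetilde{\mathfrak{S}}^\perp$ is fully described by the relations listed after equation~\eqref{eq1}. The key identity is that $\varphi$ is a degree-$2$ map, so $\varphi^\ast(\lambda(n,\mu)).\varphi^\ast(\lambda(n,\mu))=2\,\lambda(n,\mu)^2$. Thus it suffices to compute the self-intersection of the right-hand side of \eqref{eq2} on $\widetilde{\mathfrak{S}}^\perp$ and divide by $2$.

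First I would write, using \eqref{eq2} and then \eqref{eq4},
\begin{equation*}
\varphi^\ast(\lambda(n,\mu))=\bl'^\ast(L_{n,n,n})-\sum_{i=1}^{2g+2}\mu_i r_{i}^{\perp}
=\bl'^\ast\Big(\bl^\ast(nC_0+nK_\mathfrak{R}f)-n\sum_{i=1}^{2g-2}E_i\Big)-\sum_{i=1}^{2g+2}\mu_i r_{i}^{\perp}.
\end{equation*}
Then I would expand the self-intersection. The cross terms between the $\bl'^\ast\bl^\ast$-part and the $r_i^\perp$-part vanish: the $r_i^\perp$ are exceptional curves from blowing up the points $r_i$ lying on $\Delta$, which are disjoint from the pulled-back classes of divisors on $\mathfrak{S}$, so $\bl'^\ast\bl^\ast(D).r_i^\perp=0$ and likewise $E_j.r_i^\perp=0$ after pullback. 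This leaves two contributions.

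The first contribution is $\big(\bl^\ast(nC_0+nK_\mathfrak{R}f)-n\sum_i E_i\big)^2$, computed on $\widetilde{\mathfrak{S}}$ using $\bl^\ast(D_1).\bl^\ast(D_2)=D_1.D_2$, $\bl^\ast(D).E_i=0$, and $E_i.E_j=-\delta_{ij}$. Since $C_0^2=0$, $C_0.f=1$, $f^2=0$ and $\deg K_\mathfrak{R}=2g-2$, one gets $(nC_0+nK_\mathfrak{R}f)^2=2n^2(2g-2)$, while $\big(n\sum_i E_i\big)^2=-n^2(2g-2)$, so the first contribution is $2n^2(2g-2)-n^2(2g-2)=n^2(2g-2)$. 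The second contribution is $\big(\sum_i\mu_i r_i^\perp\big)^2=-\sum_i\mu_i^2=-\mu^{(2)}$, using $r_i^\perp.r_j^\perp=-\delta_{ij}$. Adding these gives $\varphi^\ast(\lambda(n,\mu))^2=n^2(2g-2)-\mu^{(2)}$, and dividing by $2$ yields the claim.

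I do not expect a genuine obstacle here, since every needed intersection number is already tabulated in the excerpt; the only point demanding care is the bookkeeping that the $r_i^\perp$ have zero intersection with all the other classes appearing in $\varphi^\ast(L_{n,n,n})$, so that the cross terms drop and the computation splits cleanly into the two pieces above.
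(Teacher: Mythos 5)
Your proposal is correct and follows the paper's proof essentially verbatim: both use $\varphi_\ast\circ\varphi^\ast(\lambda(n,\mu)).\lambda(n,\mu)=2\lambda(n,\mu)^2$, expand via \eqref{eq2} and \eqref{eq4}, and evaluate the self-intersection term by term using the tabulated intersection numbers to get $2n^2(2g-2)-n^2(2g-2)-\mu^{(2)}$. No discrepancy to report.
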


\begin{proof}
The proof is also a simple calculation similar to the proof of Lemma~\ref{simlemma}:
$$\begin{aligned}
2\lambda(n,\mu)^2&=\varphi_{\ast}\circ\varphi^\ast(\lambda(n,\mu)).\lambda(n,\mu)=\varphi^\ast(\lambda(n,\mu)).\varphi^\ast(\lambda(n,\mu))\\
&=(L_{n,n,n}^{\perp})^2=\Big(\bl'^\ast(L_{n,n,n})-\sum_{i=1}^{2g+2}\mu_i r_{i}^{\perp}\Big)^2\\
&=\Big(\bl'^\ast(\bl^\ast(nC_0+nK_\mathfrak{R}f)-n\sum_{i=1}^{2g-2}E_i)-\sum_{i=1}^{2g+2}\mu_i r_{i}^{\perp}\Big)^2\\
&=2n^2(2g-2)-n^2(2g-2)-(\sum_{i=1}^{2g+2}\mu_{i}^{2})\\
&=n^2(2g-2)-\mu^{(2)}.
\end{aligned}$$

\end{proof}

The following lemma is essentially the same as one in \cite{tv90} except one assumption. We reproduce the almost same proof here.

\begin{lemma}\label{lemll}\cite{tv90}
Let $\Sigma$ be an analytic complex surface, $K_\Sigma$ a canonical divisor, and $f:\mathbb{P}^1\to \Sigma$ a morphism. We suppose that $f(\mathbb{P}^1).K_\Sigma=0$. Then for all germs of analytic deformation $t\mapsto f_t$ of $f=f_0$, we have $f_t(\mathbb{P}^1)=f_0(\mathbb{P}^1)$ for all $t$ in the neighborhood of $0$.
\end{lemma}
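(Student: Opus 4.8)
The plan is to deduce rigidity of the rational curve $f(\mathbb{P}^1)$ from a deformation-theoretic analysis of the morphism $f$, the decisive input being that the hypothesis $f(\mathbb{P}^1)\cdot K_\Sigma=0$ makes the pulled-back tangent bundle have degree zero. Since $\Sigma$ is a surface, $\det T_\Sigma=\mathcal{O}_\Sigma(-K_\Sigma)$, and therefore $\deg f^\ast T_\Sigma=\deg f^\ast(\det T_\Sigma)=-\deg f^\ast K_\Sigma=-\,f(\mathbb{P}^1)\cdot K_\Sigma=0$. The reduction to $f$ birational onto its image $C:=f(\mathbb{P}^1)$ is convenient (it lets us identify deformations of the map with deformations of the image) but inessential for the bundle computation, which is insensitive to the degree of $f$ onto its image.

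Next I would invoke Grothendieck's splitting theorem on $\mathbb{P}^1$: the rank-two bundle $f^\ast T_\Sigma$ decomposes as $\mathcal{O}(a)\oplus\mathcal{O}(b)$ with $a+b=\deg f^\ast T_\Sigma=0$. Because $f$ is non-constant, its differential is a nonzero sheaf injection $df\colon T_{\mathbb{P}^1}=\mathcal{O}(2)\hookrightarrow f^\ast T_\Sigma$; a nonzero map out of $\mathcal{O}(2)$ forces $\max(a,b)\ge 2$, so after relabeling $a\ge 2$ and $b=-a\le -2$. As there is no nonzero map $\mathcal{O}(2)\to\mathcal{O}(b)$, the differential lands in the first summand, and the normal sheaf $N_f:=\operatorname{coker}(df)$ splits as a torsion part supported on the ramification locus of $f$ (of length $a-2$) together with the locally free quotient $N'_f\cong\mathcal{O}(b)=\mathcal{O}(-a)$ of degree $\le -2$. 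In particular $H^0(\mathbb{P}^1,N'_f)=0$.

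The core of the proof is to convert this vanishing into rigidity of the image. For a germ $t\mapsto f_t$ of analytic deformation, the velocity $\dot f_0$ is a section of $f^\ast T_\Sigma$, and the first-order motion of the image cycle $C_t=f_t(\mathbb{P}^1)$ is measured by the image of $\dot f_0$ in the locally free normal quotient $N'_f$. Since $H^0(\mathbb{P}^1,N'_f)=0$, this normal component vanishes identically; running the same computation at each nearby $f_t$ shows that the induced map from the deformation parameter into the Chow variety of $\Sigma$ has identically vanishing derivative, whence $t\mapsto f_t(\mathbb{P}^1)$ is locally constant and $f_t(\mathbb{P}^1)=f_0(\mathbb{P}^1)$ for all small $t$.

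The step I expect to be the genuine obstacle is this last conversion. When $f$ is an immersion the argument is immediate, since then $N_f=N'_f=\mathcal{O}(-2)$ carries no sections and the image cannot move even infinitesimally. In the presence of ramification one must rule out that an image-altering deformation is hidden either in the three-dimensional reparametrization directions, which account precisely for the subsheaf $df(T_{\mathbb{P}^1})$, or in the sections of the torsion summand of $N_f$ concentrated at the ramification points; the point is that these first-order torsion deformations must be shown not to integrate to genuine analytic families that change $f_t(\mathbb{P}^1)$. This is exactly where the strict negativity $\deg N'_f\le -2<0$ coming from $f(\mathbb{P}^1)\cdot K_\Sigma=0$ is indispensable, and it is the delicate point carried over from the argument in \cite{tv90}.
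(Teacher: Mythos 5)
Your proposal follows essentially the same route as the paper: both deduce $\deg f^\ast T_\Sigma=0$ from $f(\mathbb{P}^1).K_\Sigma=0$, observe that the locally free rank-one quotient of $f^\ast T_\Sigma$ by $T_{\mathbb{P}^1}$ then has strictly negative degree (hence no global sections), and conclude that the velocity of the deformation dies in the normal direction away from finitely many points. The delicate final step you flag---upgrading the first-order vanishing at ramification points to actual constancy of the image---is likewise passed over in the paper's proof, which simply asserts ``hence $f_t=f_0$,'' so your version is, if anything, more candid about where the remaining work lies.
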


\begin{proof}
Let $\TT_\Sigma$ and $\TT_{\mathbb{P}^1}$ be the tangent bundles respectively. Using $f_t$, we have an exact sequence 
$$0\to \TT_{\mathbb{P}^1}\to f_{t}^{\ast} \TT_\Sigma\to N_t\to0$$
where $N_t$ is a normal sheaf on $\mathbb{P}^1$. The hypothesis implies that the degree of $f_{t}^{\ast} T_\Sigma$ is equal to $0$. Since the degree of $\TT_{\mathbb{P}^1}$ is $2$, for all $t$ in the neighborhood of zero, $N_t$ is a direct sum of locally free sheaf of rank $1$ with strictly negative degree and a torsion sheaf. Therefore, the image on $N_t$ of a section $\partial f/\partial t$ which is a section of $f_{t}^{\ast}\TT_\Sigma$ is zero except a finite number of points at most. Hence $f_t=f_0$.
\end{proof}

\begin{theorem}\label{th44}
We have
$$n^2(2g-2)+4-\mu^{(2)}\geq0.$$
Consequently, there are only finitely many possible
$$\mu(\widehat{\mathfrak{R}})=(\mu_i(\widehat{\mathfrak{R}})_{1\leq i\leq 2g+2})\in\mathbb{N}^{2g+2}.$$
\end{theorem}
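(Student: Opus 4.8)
The plan is to extract the inequality directly from the adjunction (arithmetic genus) formula applied to a member of the linear system $|\lambda(n,\mu)|$ on the smooth rational surface $\mathfrak{S}^\dagger$, feeding in the two intersection numbers already computed in Lemma~\ref{simlemma} and Lemma~\ref{simle2}.

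First I would fix a type $\mu=\mu(\widehat{\mathfrak{R}})$ that actually occurs, so that by Theorem~\ref{th11} there is an irreducible rational curve $C=\varrho(\widehat{\mathfrak{R}})$ lying in the divisor class $\lambda(n,\mu)$ on $\mathfrak{S}^\dagger$. Because $\mathfrak{S}^\dagger$ is smooth, the adjunction formula reads $2p_a(C)-2=C.(C+K_{\mathfrak{S}^\dagger})$. The mixed term is $C.K_{\mathfrak{S}^\dagger}=\lambda(n,\mu).K_{\mathfrak{S}^\dagger}=0$ by Lemma~\ref{simlemma}, and the self-intersection is $C^2=\lambda(n,\mu)^2=\tfrac{1}{2}(n^2(2g-2)-\mu^{(2)})$ by Lemma~\ref{simle2}. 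Substituting gives $2p_a(C)-2=\tfrac{1}{2}(n^2(2g-2)-\mu^{(2)})$.

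Now I would use that $C$ is an irreducible rational curve: its normalization is $\mathbb{P}^1$, so the geometric genus is $0$, and hence $p_a(C)\geq 0$. Plugging $p_a(C)\geq 0$ into the displayed identity yields $n^2(2g-2)-\mu^{(2)}\geq -4$, which is exactly the claimed bound $n^2(2g-2)+4-\mu^{(2)}\geq 0$. For the finiteness: with $n$ and $g$ fixed, the number $N:=n^2(2g-2)+4$ is a constant and the inequality says $\sum_{i=1}^{2g+2}\mu_i^2\leq N$. Since the $\mu_i$ are non-negative integers with $0\leq\mu_i\leq\sqrt{N}$, only finitely many tuples $\mu\in\mathbb{N}^{2g+2}$ satisfy this, and a fortiori only finitely many of them are adapted to $n$.

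The step I expect to guard most carefully is the inequality $p_a(C)\geq 0$, which needs the chosen member of $|\lambda(n,\mu)|$ to be a genuine irreducible reduced rational curve rather than a reducible or non-reduced divisor; this is precisely what the identification $C=\varrho(\widehat{\mathfrak{R}})$ in Theorem~\ref{th11} provides. I note that the rigidity Lemma~\ref{lemll} is not required to produce the numerical bound itself, but it complements the result: since $\lambda(n,\mu).K_{\mathfrak{S}^\dagger}=0$, it shows each nonempty system $|\lambda(n,\mu)|$ is rigid, so that for every admissible type there are only finitely many covers; together with the finiteness of types this completes the finiteness of all Hitchin hyperelliptic tangential covers.
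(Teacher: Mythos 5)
Your proposal is correct and follows essentially the same route as the paper: apply the adjunction formula to $\varrho(\widehat{\mathfrak{R}})\in|\lambda(n,\mu)|$, substitute the intersection numbers from Lemma~\ref{simlemma} and Lemma~\ref{simle2}, and use nonnegativity of the genus to get the bound, with finiteness of the bounded integer tuples $\mu$ following immediately. Your added care in distinguishing arithmetic from geometric genus (and in noting that irreducibility/reducedness of $\varrho(\widehat{\mathfrak{R}})$ is what justifies $p_a\geq 0$) is a reasonable tightening of the paper's terser statement, not a different argument.
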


\begin{proof}
Let $\varrho(\widehat{\mathfrak{R}})\in|\lambda(n,\mu)|$. Using Lemma~\ref{simlemma} and Lemma~\ref{simle2} combining with the adjunction formula, we have
$$\begin{aligned}
g(\varrho(\widehat{\mathfrak{R}}))&=1+\frac{\varrho(\widehat{\mathfrak{R}}).\varrho(\widehat{\mathfrak{R}})+K_{\mathfrak{S}^\dagger}.\varrho(\widehat{\mathfrak{R}})}{2}\\
&=1+\frac{n^2(2g-2)-\mu^{(2)}}{4}.
\end{aligned}$$
Since the genus $g(\varrho(\widehat{\mathfrak{R}}))$ is $\geq0$, we have the desired result.
\end{proof}

Finally, we have the main result:

\begin{corollary}\label{th45}
There are only finitely many Hitchin hyperelliptic tangential covers over a hyperelliptic curve.
\end{corollary}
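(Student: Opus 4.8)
The plan is to fix the degree $n$ and the genus $g$ (as in the definition of the moduli spaces $\mathscr{HHT}(n,g,\mathfrak{S},\mu)$) and to show that the disjoint union of the $\mathscr{HHT}(n,g,\mathfrak{S},\mu)$ over all types $\mu$ adapted to $n$ is finite. By Theorem~\ref{th44} only finitely many types $\mu$ can occur, because the bound $n^{2}(2g-2)+4-\mu^{(2)}\geq 0$ constrains $\mu=(\mu_i)\in\mathbb{N}^{2g+2}$ to a bounded, hence finite, set. Thus it suffices to prove that for each admissible $\mu$ the space $\mathscr{HHT}(n,g,\mathfrak{S},\mu)$ is finite, and the whole finiteness statement then follows by summing over the finitely many $\mu$.

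For a fixed admissible $\mu$, I would invoke the bijection of Theorem~\ref{th11}, which identifies $\mathscr{HHT}(n,g,\mathfrak{S},\mu)$ with the set of rational curves occurring as members of the complete linear system $|\lambda(n,\mu)|$, via $\widehat{\mathfrak{R}}\mapsto\varrho(\widehat{\mathfrak{R}})$. So the task reduces to showing that $|\lambda(n,\mu)|$ contains only finitely many rational curves. The crucial numerical input is Lemma~\ref{simlemma}, which gives $\varrho(\widehat{\mathfrak{R}}).K_{\mathfrak{S}^\dagger}=\lambda(n,\mu).K_{\mathfrak{S}^\dagger}=0$ for every such member. The decisive step is then a rigidity argument: writing $C=\varrho(\widehat{\mathfrak{R}})$ with normalization $f\colon\mathbb{P}^1\to\mathfrak{S}^\dagger$, the relation $f(\mathbb{P}^1).K_{\mathfrak{S}^\dagger}=0$ lets me apply Lemma~\ref{lemll}, so every analytic deformation of $f$ fixes the image $f(\mathbb{P}^1)$. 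Hence no rational member of $|\lambda(n,\mu)|$ can move inside the system as a rational curve; each is an isolated point of the locus of rational members. Since $|\lambda(n,\mu)|$ is a finite-dimensional projective space, hence compact, a subset all of whose points are isolated must be finite, and therefore $\mathscr{HHT}(n,g,\mathfrak{S},\mu)$ is finite.

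I expect the genuine obstacle to lie in this rigidity step rather than in the bookkeeping. One must be careful that Lemma~\ref{lemll} really prevents the rational members from sweeping out a positive-dimensional family, even though $|\lambda(n,\mu)|$ itself may well have positive dimension with generic member a smooth curve of positive genus $1+\tfrac{1}{4}\bigl(n^{2}(2g-2)-\mu^{(2)}\bigr)$, only the special singular members being rational. Passing from ``each rational member is rigid'' to ``there are finitely many rational members'' uses the compactness of the projective space $|\lambda(n,\mu)|$ together with the fact that the rational (equivalently, of geometric genus $0$) locus is an analytically constructible subset, so that being everywhere $0$-dimensional forces it to be finite; making this last implication precise is where the care is required.
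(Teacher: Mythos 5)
Your proposal follows the paper's own proof essentially verbatim: Theorem~\ref{th44} bounds the set of admissible types $\mu$, Theorem~\ref{th11} identifies each $\mathscr{HHT}(n,g,\mathfrak{S},\mu)$ with (the rational members of) $|\lambda(n,\mu)|$, and Lemma~\ref{simlemma} together with the rigidity Lemma~\ref{lemll} forces that locus to be zero-dimensional, hence finite. The paper states the last step more bluntly as $\dim_{\mathbb{C}}|\lambda(n,\mu)|=0$ (reading Theorem~\ref{th11} as saying every member of the system is rational); your added care about isolating the genus-zero locus inside a possibly larger linear system is an elaboration of the same argument, not a different route.
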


\begin{proof}
From Lemma~\ref{lemll}, we have $\dim_\mathbb{C}|\lambda(n,\mu)|=0$ and from Theorem~\ref{th44}, there are finitely many $\mu$. Finally, combining the above with Theorem~\ref{th11}, 
the claim follows.
\end{proof}

Combining Corollary~\ref{th45} with \eqref{dimeq}, we have the following corollary:

\begin{corollary}
Let $n>2$. The moduli space of Hitchin hyperelliptic spectral covers over a hyperelliptic curve has at most co-dimension of $g+1$ in the moduli space of all Hitchin spectral covers if there exists a Hitchin hyperelliptic tangential covers.

\end{corollary}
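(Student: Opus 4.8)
The plan is to obtain the codimension $g+1$ by comparing two dimension counts over the fixed hyperelliptic base $\mathfrak{R}$: the dimension of the ambient moduli of all Hitchin spectral covers, and the dimension \eqref{dimeq} of the tangential locus. First I would identify the space of all Hitchin spectral covers of degree $n$ with the Hitchin base, that is, with the affine space of coefficients of the characteristic equation cutting out a spectral curve inside the total space of $K_\mathfrak{R}$; these coefficients range over $\bigoplus_{i=1}^{n}\HH^{0}(\mathfrak{R},K_\mathfrak{R}^{\otimes i})$. By Riemann--Roch on the genus $g$ curve $\mathfrak{R}$ one has $\dim\HH^{0}(\mathfrak{R},K_\mathfrak{R})=g$ and $\dim\HH^{0}(\mathfrak{R},K_\mathfrak{R}^{\otimes i})=(2i-1)(g-1)$ for $i\ge 2$, so that
$$\dim_{\mathbb{C}}(\text{all Hitchin spectral covers})=g+\sum_{i=2}^{n}(2i-1)(g-1)=g+(n^{2}-1)(g-1).$$

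Next I would invoke the dimension formula \eqref{dimeq}, namely $\dim_{\mathbb{C}}\mathscr{HT}(n,g,\mathfrak{S})=(n^{2}-1)(g-1)-1$ for $n>2$, and subtract the two counts. This gives
$$\big(g+(n^{2}-1)(g-1)\big)-\big((n^{2}-1)(g-1)-1\big)=g+1,$$
so that the tangential locus $\mathscr{HT}(n,g,\mathfrak{S})$ sits inside the space of all Hitchin spectral covers with codimension exactly $g+1$. This is the numerical heart of the corollary, and it is precisely the promised ``combination of \eqref{dimeq}'' with the ambient dimension.

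Finally I would bring in Corollary~\ref{th45}. Every Hitchin hyperelliptic tangential cover is in particular a Hitchin tangential cover, so the moduli of hyperelliptic spectral covers is governed by the locus $\mathscr{HT}(n,g,\mathfrak{S})$ of codimension $g+1$ just computed. The hypothesis that a Hitchin hyperelliptic tangential cover exists guarantees this locus meets the hyperelliptic stratum non-trivially, while Corollary~\ref{th45} (via the rigidity $\dim_{\mathbb{C}}|\lambda(n,\mu)|=0$ from Lemma~\ref{lemll} and the finiteness of admissible types $\mu$ from Theorem~\ref{th44}) controls that stratum. Combining these, the moduli of Hitchin hyperelliptic spectral covers is carried by a family whose codimension in the space of all Hitchin spectral covers does not exceed $g+1$, which is the assertion.

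The step I expect to be the main obstacle is the first one: fixing the correct normalization of the ambient ``moduli of all Hitchin spectral covers'' so that it is the full $\bigoplus_{i=1}^{n}\HH^{0}(\mathfrak{R},K_\mathfrak{R}^{\otimes i})$ rather than a traceless $SL$-type base. Dropping the $i=1$ summand $\HH^{0}(\mathfrak{R},K_\mathfrak{R})$ would subtract $g$ from the ambient dimension and collapse the codimension from $g+1$ to $1$, so the delicate bookkeeping is exactly to verify that the construction underlying \eqref{dimeq} retains the degree one coefficient and hence the full $+g$ contribution; the remaining set-theoretic comparison between the hyperelliptic stratum and $\mathscr{HT}(n,g,\mathfrak{S})$ is then the point that pins down the inequality as ``at most'' $g+1$.
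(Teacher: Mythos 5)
The paper itself offers no argument beyond the phrase ``combining Corollary~\ref{th45} with \eqref{dimeq}'', so the only question is whether your reconstruction actually closes that gap. Your first two steps do supply the arithmetic the author leaves implicit: the full Hitchin base $\bigoplus_{i=1}^{n}\HH^{0}(\mathfrak{R},K_\mathfrak{R}^{\otimes i})$ has dimension $g+(n^{2}-1)(g-1)$, and subtracting \eqref{dimeq} gives $\operatorname{codim}\mathscr{HT}(n,g,\mathfrak{S})=g+1$. Your worry about the trace summand $\HH^{0}(\mathfrak{R},K_\mathfrak{R})$ is also well placed: over a traceless ($SL$-type) base the codimension would collapse to $1$, and the number $g+1$ only comes out with the $GL(n)$ normalization.

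The genuine gap is in your last step. The hyperelliptic locus $\mathscr{H}$ is not contained in $\mathscr{HT}(n,g,\mathfrak{S})$ --- Corollary~\ref{th45} only tells you that $\mathscr{H}\cap\mathscr{HT}$ is finite and, by hypothesis, nonempty --- so the sentence ``the moduli of hyperelliptic spectral covers is carried by a family whose codimension does not exceed $g+1$'' is an assertion, not a deduction; nothing in your argument transfers the codimension of $\mathscr{HT}$ to $\mathscr{H}$. The missing tool is the intersection-dimension inequality: for subvarieties of the smooth Hitchin base $B$ meeting nontrivially, every component of $\mathscr{H}\cap\mathscr{HT}$ has dimension at least $\dim\mathscr{H}+\dim\mathscr{HT}-\dim B=\dim\mathscr{H}-(g+1)$, and finiteness of the (nonempty) intersection then forces $\dim\mathscr{H}\le g+1$. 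Note the direction: what actually follows is an upper bound on the \emph{dimension} of the hyperelliptic locus by $g+1$, equivalently the lower bound $\operatorname{codim}\mathscr{H}\ge\dim B-(g+1)$, which is the opposite of ``codimension at most $g+1$'' read literally. You should either supply this intersection argument and flag that the corollary's wording apparently conflates dimension with codimension, or explain how an upper bound on $\operatorname{codim}\mathscr{H}$ could follow from the finiteness of $\mathscr{H}\cap\mathscr{HT}$; as written, it cannot.
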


\end{document}